\documentclass[conference, 10pt, twocolumn]{ieeeconf}       
\bstctlcite{bstctl:etal, bstctl:nodash, bstctl:simpurl}
\overrideIEEEmargins

\def\BibTeX{{\rm B\kern-.05em{\sc i\kern-.025em b}\kern-.08em
    T\kern-.1667em\lower.7ex\hbox{E}\kern-.125emX}}
    
\usepackage[left=54pt, right=54pt,bottom=54pt, top=54pt]{geometry}


\usepackage{amsmath,mathrsfs,amsfonts,amssymb,graphicx,epsfig}
 \usepackage{amsthm}
\usepackage{subcaption}
\usepackage{color,multirow,rotating}
\usepackage{algorithm,algpseudocode,algorithmicx}
\usepackage{cite,url,framed,bm,balance,nicematrix}
\usepackage{stmaryrd}

\setlength{\tabcolsep}{1.1pt}

\newtheorem{theorem}{Theorem}

\newtheorem{definition}{Definition}

\newtheorem{proposition}{Proposition}
\newtheorem{remark}{Remark}

\usepackage{tikz}
\usetikzlibrary{calc,trees,positioning,arrows,chains,shapes.geometric,decorations.pathreplacing,decorations.pathmorphing,shapes, matrix,shapes.symbols}

\newcommand{\differential}{{\rm{d}}}


\makeatletter
\newcommand{\dotminus}{\mathbin{\text{\@dotminus}}}

\newcommand{\@dotminus}{%
  \ooalign{\hidewidth\raise1ex\hbox{.}\hidewidth\cr$\m@th-$\cr}%
}

\allowdisplaybreaks

\title{\LARGE\textbf{The Ground Cost for Optimal Transport of Angular Velocity}
}

\author{Karthik Elamvazhuthi and Abhishek Halder
\thanks{Karthik Elamvazhuthi is with the Los Alamos National Laboratory, Los Alamos, NM 87544, USA, {\tt\small{karthikevaz@lanl.gov}}.}
\thanks{Abhishek Halder is with the Department of Aerospace Engineering, Iowa State University, Ames, IA 50011, USA, {\tt\small{ahalder@iastate.edu}}.%
}}

\IEEEoverridecommandlockouts
\begin{document}
\bstctlcite{IEEE_b:BSTcontrol}
\maketitle
\thispagestyle{empty}
\pagestyle{empty}

\begin{abstract}
We revisit the optimal transport problem over angular
velocity dynamics given by the controlled Euler equation. The solution
of this problem enables stochastic guidance of spin states of a rigid
body (e.g., spacecraft) over a hard deadline constraint by transferring
a given initial state statistics to a desired terminal state
statistics. This is an instance of generalized optimal transport over
a nonlinear dynamical system. While prior work has reported
existence-uniqueness and numerical solution of this dynamical optimal
transport problem, here we present structural results about the
equivalent Kantorovich a.k.a. optimal coupling formulation. Specifically, we focus on deriving the ground cost for the associated Kantorovich optimal coupling formulation. The ground cost is equal to the cost of transporting unit amount of mass from a specific realization of the initial or source joint probability measure to a realization of the  terminal or target joint probability measure, and determines the Kantorovich formulation. Finding the ground cost leads to solving a structured deterministic nonlinear optimal control problem, which is shown to be amenable to an analysis technique pioneered by Athans et al. We show that such techniques have broader applicability in determining the ground cost (thus Kantorovich formulation) for a class of generalized optimal mass transport problems involving nonlinear dynamics with translated norm-invariant drift.

\end{abstract}


\section{Introduction}\label{sec:Introduction}
We consider the optimal mass transport (OMT) problem \cite{villani2021topics,villani2008optimal} over the rigid body angular velocity dynamics given by the Euler equation:
\begin{subequations}
\begin{align}
J_1 \dot{\omega}_1 &=\left(J_2-J_3\right) \omega_2 \omega_3+\tau_1, \\
J_2 \dot{\omega}_2 &=\left(J_3-J_1\right) \omega_3 \omega_1+\tau_2, \\
J_3 \dot{\omega}_3 &=\left(J_1-J_2\right) \omega_1 \omega_2+\tau_3.
\end{align}
\label{ControlledODEComponentwise}
\end{subequations}
In \eqref{ControlledODEComponentwise}, the vector $$\bm{\omega}:=(\omega_1,\omega_2,\omega_3)^{\top}\in\mathbb{R}^{3}$$ is the angular velocity, the parameter vector
$$\bm{J}:=\left(J_1,J_2,J_3\right)^{\top}\in\mathbb{R}^{3}_{>0}$$ 
comprises of the principal moments of inertia, and $$\bm{\tau}:=(\tau_1,\tau_2,\tau_3)^{\top}\in\mathbb{R}^{3}$$ is the vector of torque inputs along the principal axes. 

In our context, the ``mass" in OMT refers to probability mass, as we are concerned with optimally steering the joint statistics of the initial angular velocity to that of the terminal angular velocity via \eqref{ControlledODEComponentwise} over a given finite horizon.

Specifically, the problem is to find optimal feedback policy for the input torque $\bm{\tau}$ to accomplish \emph{minimum effort} transfer of a given joint probability density function (PDF) of initial angular velocity\footnote{the symbol $\sim$ denotes ``follows the law or statistics"} $\bm{\omega}_0 \sim \rho_{0}$ to another given PDF of terminal or final angular velocity $\bm{\omega}_{\mathrm{f}} \sim \rho_{\mathrm{f}}$ over a given deadline $[0,t_{\mathrm{f}}]$. In other words, the problem data $\rho_{0},\rho_{\mathrm{f}}$ satisfy
$$\rho_{0},\rho_{\mathrm{f}}\geq 0, \quad \int_{\mathbb{R}^{3}}\rho_{0}\left(\bm{\omega}_{0}\right)\differential\bm{\omega}_{0} = 1, \quad \int_{\mathbb{R}^{3}}\rho_{\mathrm{f}}\left(\bm{\omega}_{\mathrm{f}}\right)\differential\bm{\omega}_{\mathrm{f}} = 1.$$

This is a generalized optimal mass transport (GOMT) problem since the classical OMT \cite{benamou2000computational} is over the vector of integrators, i.e., has full control authority. In contrast, our GOMT problem requires optimally steering the state statistics over a controlled nonlinear dynamical constraint \eqref{ControlledODEComponentwise}.

\subsubsection*{Motivation} There exists clear engineering motivation behind the GOMT problem over the angular velocity dynamics given by the Euler equation. Solving this problem amounts to designing a guidance policy to steer the initial stochastic spin or angular velocity state of a rigid body (e.g., spacecraft) to a desired terminal stochastic spin state. In spacecraft spin stabilization, the initial angular velocity state is stochastic in practice because of estimation errors. Thus, the specification of initial PDF is natural. The desired terminal PDF can be seen as an acceptable statistical accuracy specification. For instance, instead of (deterministically) enforcing some constant (e.g., zero) terminal angular velocity, one may specify a tall Gaussian endpoint PDF around that desired constant angular velocity state. The minimum effort objective over the given deadline directly translates to cost, e.g., when the actuation involves thruster firings.    

\subsubsection*{The problem}
For convenience, let us define the state vector
\begin{align}
\bm{x} := \bm{J}\odot\bm{\omega}, \;\text{where}\;\odot\;\text{denotes elementwise product},
\label{defStateVector}    
\end{align}
and the control vector
\begin{align}
\bm{u} := \bm{\tau}.
\label{defControlVector}    
\end{align}
Also define the constants
\begin{equation}
\alpha := \dfrac{1}{J_3} - \dfrac{1}{J_2},\quad
\beta := \dfrac{1}{J_1} - \dfrac{1}{J_3},\quad
\gamma := \dfrac{1}{J_2} - \dfrac{1}{J_1}.
\label{defalphabetagamma}     
\end{equation}
Note that $$\alpha + \beta + \gamma = 0.$$ 
The GOMT problem of interest is the following:
\begin{align}
\underset{\left(\xi,\bm{u}\right)}{\arg\inf} \displaystyle\int_{0}^{t_{\mathrm{f}}}\int_{\mathbb{R}^{3}}\frac{1}{2}\bm{u}^{\top}\bm{u}\;\:\xi(t,\bm{x})\differential\bm{x}\:\differential t\label{GOMTobj}
\end{align}
subject to the controlled dynamical constraint
\begin{subequations}
\begin{align}
\dot{x}_1 &=\alpha x_2 x_3 + u_1,\\
\dot{x}_2 &=\beta x_3 x_1 + u_2,\\
\dot{x}_3 &=\gamma x_1 x_2 + u_3,
\end{align}
\label{xdynamics}
\end{subequations}
and endpoint statistics constraints
\begin{align}
\bm{x}_{0}\sim\xi_{0} := \dfrac{\rho_{0}\left(\bm{x}\oslash \bm{J}\right)}{J_1 J_2 J_3}, \quad \bm{x}_{\mathrm{f}}\sim\xi_{\mathrm{f}} := \dfrac{\rho_{\mathrm{f}}\left(\bm{x}\oslash \bm{J}\right)}{J_1 J_2 J_3},
\label{defxi0xi1}    
\end{align}
where $\oslash$ denotes elementwise division. Here, $\xi$ denotes the joint PDF of $\bm{x}$, the controlled state vector. The new endpoint PDFs $\xi_{0},\xi_{\mathrm{f}}$ are the pushforwards of $\rho_{0},\rho_{\mathrm{f}}$ under \eqref{defStateVector}. Due to the distributional constraints in \eqref{defxi0xi1}, the GOMT is a stochastic optimal control problem even though there is no process noise in \eqref{xdynamics}. 

The $\arg\inf$ in \eqref{GOMTobj} is over
\begin{align}
\left(\xi,\bm{u}\right)\in\mathcal{P}_{0{\mathrm{f}}}\times\mathcal{U},
\label{FeasibleSet}
\end{align}
where 
$\mathcal{P}_{0{\mathrm{f}}}$ is the space of all PDF-valued curves over $\mathbb{R}^{3}$ which are absolutely continuous w.r.t. $t\in[0,t_{\mathrm{f}}]$, and have endpoints $\xi_0$ at $t=0$, and $\xi_{\mathrm{f}}$ at $t=t_{\mathrm{f}}$. The set $\mathcal{U}$ is the collection of all finite energy Markovian control policies, i.e.,
$$\mathcal{U}:=\{\bm{u}:[0,t_{\mathrm{f}}]\times\mathbb{R}^{3}\mapsto\mathbb{R}^{3}\mid \|\bm{u}\|_2^2 < \infty\}.$$
We mention here that in the objective \eqref{GOMTobj}, one may replace the Lagrangian $\frac{1}{2}\bm{u}^{\top}\bm{u}$ by its weighted variant $\frac{1}{2}\bm{u}^{\top}\bm{Ru}$ for fixed positive definite $\bm{R}$. Since the weighted Lagrangian can be reduced to the unweighted version by simply re-scaling the control, we will not elaborate this generalization. 

Assuming the endpoint PDFs $\rho_0,\rho_{\mathrm{f}}$ have finite second moments, the existence-uniqueness for the solution of \eqref{GOMTobj}, \eqref{xdynamics}, \eqref{defxi0xi1} was proved in \cite[Sec. III-B]{yan2023optimal} using Figalli's theory \cite{figalli2007optimal} of OMT with Tonelli Lagrangian \cite[p. 118]{villani2008optimal}. So the $\arg\inf$ in \eqref{GOMTobj} can be replaced by $\arg\min$. Its numerical solution was found in \cite[Sec. V-VII]{yan2023optimal} using stochastic regularization followed by training a neural network informed by a system of boundary-coupled PDEs comprising of the necessary conditions for optimality.

In this work, we focus on the \emph{static} GOMT a.k.a. the Kantorovich formulation \cite{kantorovich1942translocation} that is equivalent to our dynamic GOMT problem \eqref{GOMTobj}, \eqref{xdynamics}, \eqref{defxi0xi1}. The static formulation is of the form
\begin{align}
\underset{\pi \in \Pi_2\left(\xi_0, \xi_{\mathrm{f}}\right)}{\arg \inf } \int_{\mathbb{R}^3 \times \mathbb{R}^3} c(\boldsymbol{x}_{0}, \boldsymbol{x}_{\mathrm{f}}) \mathrm{d} \pi(\boldsymbol{x}_{0}, \boldsymbol{x}_{\mathrm{f}}) 
\label{staticOMT}
\end{align}
for suitable \emph{ground cost} 
\begin{align}
c:\mathbb{R}^{3}\times\mathbb{R}^{3}\mapsto\mathbb{R}_{\geq 0},
\label{GroundCostMapping}    
\end{align}
and the feasible set
\begin{align}
&\Pi_{2}\left(\xi_0, \xi_{\mathrm{f}}\right):=\{\text{joint probability measure} \;\pi(\boldsymbol{x}_{0}, \boldsymbol{x}_{\mathrm{f}})\;\text{with}\nonumber\\
&\text{finite second moment}\;\mid \bm{x}_{0}\sim\xi_0,\;\bm{x}_{\mathrm{f}}\sim\xi_{\mathrm{f}}\}.    
\label{DefPi2}
\end{align}
In other words, the minimization in \eqref{staticOMT} is over all possible couplings of the endpoint marginals $\xi_0,\xi_{\mathrm{f}}$.

What is particularly appealing about formulation \eqref{staticOMT} is that it is a linear program for some to-be-determined ground cost $c$. The ground cost quantifies the cost of steering unit mass from a fixed angular velocity $\bm{x}_{0}$ to another fixed angular velocity $\bm{x}_{\mathrm{f}}$. Furthermore, this ground cost $c$ can be found as the optimal value of the associated deterministic optimal control problem: 
\begin{equation}
\begin{gathered}
c(\bm{x}_{0},\bm{x}_{\mathrm{f}}) = \underset{\bm{u}}{\min}\int_{0}^{t_{\mathrm{f}}} \frac{1}{2}\bm{u}^{\top}\bm{u}\:\differential t\\
\text{subject to} \quad \eqref{xdynamics},\quad \bm{x}(t=0)=\bm{x}_{0},\quad \bm{x}(t=t_{\mathrm{f}})=\bm{x}_{\mathrm{f}}.
\end{gathered}
\label{OCPforGroundCost}
\end{equation}
The equivalence between problem \eqref{staticOMT} for the cost \eqref{OCPforGroundCost} and \eqref{GOMTobj} follows from the equivalence between static and dynamic formulations of optimal transport with optimal control costs (see e.g., \cite[Thm. 3.10]{elamvazhuthi2024benamou}).
\begin{remark}
The $c$ for classical OMT can be recovered from \eqref{OCPforGroundCost} by replacing \eqref{xdynamics} with the simpler $$\dot{\bm{x}}=\bm{u}.$$ Then, an easy computation via Pontryagin's minimum principle yields 
\begin{align}
c_{\mathrm{classical}}(\bm{x}_0,\bm{x}_{\mathrm{f}}) = \dfrac{\|\bm{x}_{0} - \bm{x}_{\mathrm{f}}\|_2^2}{2\:t_{\mathrm{f}}},
\label{cclassical}
\end{align}
the scaled squared Euclidean distance. However, such direct computation becomes unwieldy for \eqref{xdynamics}.
\end{remark}

\subsubsection*{Contributions} We make the following concrete contributions.

\begin{itemize}
\item Inspired by certain analysis techniques pioneered by Athans et al. \cite{athans1963time}, we derive results for the optimal value of the deterministic optimal control problem \eqref{OCPforGroundCost}, which serves as the ground cost $c$ for the Kantorovich GOMT formulation \eqref{staticOMT}. These analysis techniques combine different variants of the Cauchy-Schwarz inequality, and bypass Pontryagin's minimum principle. The latter is difficult to analyze for this problem due to dynamical nonlinearities. 

\vspace*{0.1in}

\item We show that the aforesaid techniques have broader applicability--beyond the Eulerian angular velocity dynamics--in determining the ground cost for a class of GOMT problems over fully-actuated nonlinear systems with translated norm-invariant drift. We prove that the ground cost for such GOMT problems coincide with the classical OMT ground cost, i.e., the scaled Euclidean distance with the important difference that the geodesics are no longer straight lines. 
\end{itemize}

\subsubsection*{Organization} In Sec. \ref{sec:MainResults}, we study the Eulerian ground cost which is the minimum value for problem \eqref{OCPforGroundCost}. Specifically, Sec. \ref{subsec:boundwellposed} provides an upper bound for this ground cost, and uses this bound to establish that the corresponding static GOMT \eqref{staticOMT} is well-posed.
A particular feasible controller is constructed in Sec. \ref{subsec:SpecificFeasibleController}, and a cost inequality for the same is derived in Sec. \ref{subsec:optimalityEuler}. 

In Sec. \ref{sec:GroundCostNormInv}, we specialize the results for the particular controller from Sec. \ref{sec:MainResults} to exactly deduce the ground costs for a class of GOMT problems when the prior drift satisfies certain translated norm-invariance property. For this class of systems, the feasible controller from Sec. \ref{subsec:SpecificFeasibleController} takes a form that is shown to saturate the cost inequality derived in Sec. \ref{subsec:optimalityEuler}, thus guaranteeing optimality.

Sec. \ref{sec:Conclusions} concludes the paper.

\section{Ground cost for Optimal Transport Over Eulerian Angular Velocity Dynamics}\label{sec:MainResults}

For $\bm{x}$ as in \eqref{defStateVector}, consider a change of co-ordinate:
\begin{align}
\bm{z} := \bm{x} - \bm{x}_{\mathrm{f}}.
\label{xtoz}    
\end{align}
Steering $\bm{x}_0$ to $\bm{x}_{\mathrm{f}}$ over $[0,t_{\mathrm{f}}]$ is then equivalent to steering $\bm{z}_0 := \bm{x}_0 - \bm{x}_{\mathrm{f}}$ to $\bm{z}_{\mathrm{f}} := \bm{0}$ over $[0,t_{\mathrm{f}}]$. 

In the new co-ordinate, the dynamics \eqref{xdynamics} gets mapped to
\begin{align}
\begin{pmatrix}
\dot{z}_1\\ 
\dot{z}_2\\
\dot{z}_3
\end{pmatrix} &= \underbrace{\begin{pmatrix}
\alpha z_2 z_3\\
\beta z_3 z_1\\
\gamma z_1 z_2
\end{pmatrix}}_{\bm{f}_0(\bm{z})} + \underbrace{\begin{bmatrix}
0 & \alpha x_{\mathrm{f}3} & \alpha x_{\mathrm{f}2}\\
\beta x_{\mathrm{f}3} & 0 & \beta x_{\mathrm{f}1}\\
\gamma x_{\mathrm{f}2} & \gamma x_{\mathrm{f}1} & 0
\end{bmatrix}\begin{pmatrix}
z_1\\ 
z_2\\
z_3
\end{pmatrix}}_{\bm{Az}}\nonumber\\
&\qquad\qquad\qquad\qquad+\underbrace{\begin{pmatrix}
\alpha x_{\mathrm{f}2}x_{\mathrm{f}3}\\ 
\beta x_{\mathrm{f}3}x_{\mathrm{f}1}\\
\gamma x_{\mathrm{f}1}x_{\mathrm{f}2}
\end{pmatrix}}_{\bm{b}} + \begin{pmatrix}
u_1\\
u_2\\
u_3
\end{pmatrix},
\label{zdynamics}
\end{align}
which differs from \eqref{xdynamics} by an additional affine drift $\bm{Az}+\bm{b}$, wherein $\left(\bm{A},\bm{b}\right)$ is a constant matrix-vector pair as above. We emphasize that the pair $\left(\bm{A},\bm{b}\right)$ is solely parameterized by the (realization of the) terminal state vector $\bm{x}_{\mathrm{f}}$.

We write \eqref{zdynamics} succinctly as
\begin{align}
\dot{\bm{z}}=\bm{f}_{0}(\bm{z}) + \bm{Az} + \bm{b} + \bm{u}.
\label{zdynamicsVectorForm}    
\end{align}
\begin{remark}
We clarify that the change-of-coordinate \eqref{xtoz} only pertains to finding the ground cost $c$ by solving the associated deterministic optimal control problem \eqref{OCPforGroundCost}. In particular, it does not induce a pushforward of probability measure. The optimal cost of problem \eqref{OCPforGroundCost} helps construct the static GOMT problem \eqref{staticOMT}. 
\end{remark}
\begin{remark}\label{remark:Abzero}
From \eqref{zdynamics}, note that if $\bm{x}_{\mathrm{f}}=\bm{0}$, then $\bm{A}=\bm{0}$, $\bm{b}=\bm{0}$.
\end{remark}

\subsection{Bound and Well-posedness}\label{subsec:boundwellposed}
We next derive a bound (Proposition \ref{prop:BoundEuler}) for the Eulerian ground cost $c$, i.e., the optimal value for problem \eqref{OCPforGroundCost}. The idea behind this bound is to make two-fold use of the observation in Remark \ref{remark:Abzero}. This allows us to bound the ground cost for arbitrary endpoint, using the ground costs for two associated problems, both with zero endpoints--one in forward time and another in reverse time. 

We then use this bound to prove well-posedness (Theorem \ref{Thm:wellposesStaticGOMT}) for the static GOMT problem \eqref{staticOMT}. 

\begin{proposition}\label{prop:BoundEuler}
The ground cost $c(\bm{x}_0,\bm{x} _{\mathrm{f}})$ in \eqref{OCPforGroundCost} satisfies 
\[c(\bm{x}_0,\bm{x} _{\mathrm{f}}) \leq\frac{\|\bm{x}_0\|_{2}^2}{t _{\mathrm{f}}} + \frac{\|\bm{x} _{\mathrm{f}}\|_{2}^2}{t _{\mathrm{f}}}\]

for all $\bm{x}_0,\bm{x} _{\mathrm{f}} \in \mathbb{R}^3$.
\label{cstnd}
\end{proposition}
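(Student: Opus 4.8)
The plan is to \emph{exhibit} one admissible control for the steering problem \eqref{OCPforGroundCost} whose cost is at most the claimed right-hand side; since $c(\bm{x}_0,\bm{x}_{\mathrm{f}})$ is the infimal such cost, the bound follows at once. The structural fact I would lean on is that the Euler drift $\bm{f}_0$ is \emph{tangential} and \emph{quadratically homogeneous}: because $\alpha+\beta+\gamma=0$, we have $\bm{x}^{\top}\bm{f}_0(\bm{x})=(\alpha+\beta+\gamma)x_1x_2x_3=0$ for every $\bm{x}$, and plainly $\bm{f}_0(\lambda\bm{x})=\lambda^2\bm{f}_0(\bm{x})$. Hence along any solution of \eqref{xdynamics} the radius evolves only through the control, $\tfrac{\differential}{\differential t}\|\bm{x}\|^2 = 2\bm{x}^{\top}\bm{u}$, so a purely radial trajectory can be produced while paying \emph{nothing} to counteract the drift.

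Concretely, I would split the deadline at its midpoint and build the control in two legs that meet at the origin---a forward-time leg steering $\bm{x}_0\mapsto\bm{0}$ on $[0,t_{\mathrm{f}}/2]$ and a reverse-time leg steering $\bm{0}\mapsto\bm{x}_{\mathrm{f}}$ on $[t_{\mathrm{f}}/2,t_{\mathrm{f}}]$---which is exactly the ``two-fold use of Remark \ref{remark:Abzero}'', each leg having a zero endpoint. On the first leg (assume $\bm{x}_0\neq\bm{0}$, else take $\bm{u}\equiv\bm{0}$ there), set $r(t):=\|\bm{x}_0\|\,(1-2t/t_{\mathrm{f}})$ and let $\bm{e}(t)$ solve $\dot{\bm{e}}=r(t)\bm{f}_0(\bm{e})$ with $\bm{e}(0)=\bm{x}_0/\|\bm{x}_0\|$; since $\bm{e}^{\top}\bm{f}_0(\bm{e})=0$ the unit norm is conserved, so $\bm{e}(t)$ stays on the (compact) sphere and the ODE has a solution on all of $[0,t_{\mathrm{f}}/2]$. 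Define $\bm{x}(t):=r(t)\bm{e}(t)$ and $\bm{u}(t):=\dot{\bm{x}}(t)-\bm{f}_0(\bm{x}(t))$. By homogeneity $\bm{f}_0(\bm{x})=r^2\bm{f}_0(\bm{e})$, while $\dot{\bm{x}}=\dot r\,\bm{e}+r\,\dot{\bm{e}}=\dot r\,\bm{e}+r^2\bm{f}_0(\bm{e})$, so the drift cancels and $\bm{u}=\dot r\,\bm{e}$, giving $\|\bm{u}(t)\|^2=\dot r(t)^2=(2\|\bm{x}_0\|/t_{\mathrm{f}})^2$. Thus this leg is admissible, reaches $\bm{x}(t_{\mathrm{f}}/2)=r(t_{\mathrm{f}}/2)\bm{e}(t_{\mathrm{f}}/2)=\bm{0}$, and costs $\int_0^{t_{\mathrm{f}}/2}\tfrac12\|\bm{u}\|^2\,\differential t = \|\bm{x}_0\|^2/t_{\mathrm{f}}$.

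The second leg is the same construction with the time direction reversed: take $r(t):=\|\bm{x}_{\mathrm{f}}\|\,(2t/t_{\mathrm{f}}-1)$ on $[t_{\mathrm{f}}/2,t_{\mathrm{f}}]$ and solve $\dot{\bm{e}}=r(t)\bm{f}_0(\bm{e})$ \emph{backward} from $\bm{e}(t_{\mathrm{f}})=\bm{x}_{\mathrm{f}}/\|\bm{x}_{\mathrm{f}}\|$, then set $\bm{x}:=r\bm{e}$ and $\bm{u}:=\dot{\bm{x}}-\bm{f}_0(\bm{x})=\dot r\,\bm{e}$; this leg starts at $\bm{x}(t_{\mathrm{f}}/2)=\bm{0}$, ends at $\bm{x}_{\mathrm{f}}$, and costs $\|\bm{x}_{\mathrm{f}}\|^2/t_{\mathrm{f}}$. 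Because the two legs agree (at $\bm{0}$) at $t=t_{\mathrm{f}}/2$, their concatenation is a continuous trajectory joining $\bm{x}_0$ to $\bm{x}_{\mathrm{f}}$ driven by a piecewise-continuous, uniformly bounded---hence finite-energy---control, so it is feasible for \eqref{OCPforGroundCost}. Summing the two leg costs gives $c(\bm{x}_0,\bm{x}_{\mathrm{f}})\le\|\bm{x}_0\|^2/t_{\mathrm{f}}+\|\bm{x}_{\mathrm{f}}\|^2/t_{\mathrm{f}}$.

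I expect the only delicate point---really a check rather than an obstacle---is legitimizing the radial ``collapse to the origin in finite time'': the feedback gain $\dot r/r$ diverges as $t\uparrow t_{\mathrm{f}}/2$, yet the control magnitude $\|\bm{u}\|=|\dot r|$ stays bounded precisely because $\|\bm{x}\|=r$ vanishes at the matching rate, so the control remains in $L^2$ and the trajectory is well defined up to and including the switching instant. Everything else reduces to the elementary computation above once the tangential, degree-two-homogeneous structure of $\bm{f}_0$ is noticed; no appeal to Pontryagin's minimum principle is required.
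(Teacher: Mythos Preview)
Your proposal is correct and follows essentially the same strategy as the paper: split the horizon at $t_{\mathrm{f}}/2$, steer $\bm{x}_0\to\bm{0}$ on the first half and $\bm{0}\to\bm{x}_{\mathrm{f}}$ on the second (the latter obtained by time reversal), exploiting the norm-invariance $\langle\bm{f}_0(\bm{x}),\bm{x}\rangle=0$ so that each half costs exactly $\|\bm{x}_0\|^2/t_{\mathrm{f}}$ and $\|\bm{x}_{\mathrm{f}}\|^2/t_{\mathrm{f}}$ respectively. The only presentational difference is that the paper delegates the per-leg construction to the Athans et~al.\ result \cite{athans1963time} (which reappears explicitly as the $\bm{A}=\bm{0}$, $\bm{b}=\bm{0}$ specialization of the feasible control $\bm{u}^{*}$ in Proposition~2), whereas you give a self-contained derivation via the polar decomposition $\bm{x}=r(t)\bm{e}(t)$ and the quadratic homogeneity $\bm{f}_0(\lambda\bm{x})=\lambda^2\bm{f}_0(\bm{x})$; your extra observation about homogeneity is what lets you verify directly that the unit-direction ODE stays on the sphere and that the drift cancels exactly, yielding the constant-magnitude radial control without citing the external reference.
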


\begin{proof}
We will divide the time horizon $[0,t _{\mathrm{f}}]$ into two halves
$[0,t _{\mathrm{f}}/2]$ and $(t _{\mathrm{f}}/2,t _{\mathrm{f}}]$, and construct a control over each.

For $\bm{x}_{\mathrm{f}}=\bm{0}$, we have $\bm{A}=\bm{0}$, $\bm{b}=\bm{0}$ (see Remark \ref{remark:Abzero}), and the drift $\bm{f}_{0}$ in \eqref{zdynamics} is norm invariant because $\langle\bm{f}_{0}(\bm{x}),\bm{x}\rangle = 0$, see \cite{athans1963time}. Then, \cite{athans1963time} constructively shows that there exists optimal control $\bm{u}_1: [0,\frac{t _{\mathrm{f}}}{2}] \rightarrow \mathbb{R}^3$ such that for the system \eqref{xdynamics}, the controlled state satisfies $\bm{x}(\frac{t _{\mathrm{f}}}{2}) = 0 $ and $\int_0^{\frac{t _{\mathrm{f}}}{2}} \frac{1}{2} \bm{u}_1^{\top}\bm{u}_1 \:\differential t = \frac{\|\bm{x}_0\|_{2}^2}{t _{\mathrm{f}}}$.

On the other hand, for $\bm{x}_{\mathrm{f}}=\bm{0}$, the time reversal of the system \eqref{xdynamics} is expressed as
\begin{equation}
\quad\dot{\bm{x}}_{r}=-\bm{f}_{0}(\bm{x}_{r})-\bm{u}_{r},
\label{timereversedODE}
\end{equation}
where $\bm{x}_{r},\bm{u}_{r}$ denote the state and control for the time-reversed system. Notice that \eqref{timereversedODE} is also norm invariant since $\langle-\bm{f}_{0}(\bm{x}_{r}),\bm{x}_{r}\rangle = 0$. Therefore, one can find a control $\bm{u}_r:[0,t _{\mathrm{f}}] \rightarrow \mathbb{R}^3$ such that 
$\bm{x}_{r}(\frac{t _{\mathrm{f}}}{2}) = 0 $ and $\int_0^{\frac{t_{\mathrm{f}}}{2}}  \frac{1}{2} \bm{u}_r^{\top}\bm{u}_r \:\differential t = \frac{\|\bm{x} _{\mathrm{f}}\|_{2}^2}{t _{\mathrm{f}}}$. Letting $$\bm{u}_2(t):=\bm{u}_r(t _{\mathrm{f}} - t)\quad\forall t \in [0,t _{\mathrm{f}}/2],$$ it then follows that for the system 
\begin{equation}
\quad\dot{\bm{x}}=\bm{f}_{0}(\bm{x})-\bm{u}_2, 
\end{equation}
we have $\bm{x}(0) = \bm{0}$ and $\bm{x}(\frac{t _{\mathrm{f}}}{2}) = \bm{x} _{\mathrm{f}}$.

Now, consider the control $\bm{u}:[0,t _{\mathrm{f}}] \rightarrow \mathbb{R}^3$ given by
\begin{equation}
\bm{u}(t) := 
\begin{cases}
\bm{u_1}(t),~~~t \in [0,\frac{t _{\mathrm{f}}}{2}],\\
\bm{u_2}(t),~~~t \in (\frac{t _{\mathrm{f}}}{2},t _{\mathrm{f}}].
\end{cases}
\end{equation}
For this choice of control, we clearly have that 
$\bm{x}(0) = \bm{x}_0$, $\bm{x}(t _{\mathrm{f}}) = \bm{x} _{\mathrm{f}}$, and by construction 
\[\int_{0}^{t_{\mathrm{f}}} \frac{1}{2}\bm{u}^{\top}\bm{u}\:\differential t =\frac{\|\bm{x}_0\|_{2}^2}{t _{\mathrm{f}}} + \frac{\|\bm{x} _{\mathrm{f}}\|_{2}^2}{t _{\mathrm{f}}}.\]
This concludes the proof.
\end{proof}

\begin{theorem}\label{Thm:wellposesStaticGOMT}
Suppose the endpoint joint PDFs $\xi_{\mathrm{0}}, \xi_{\mathrm{f}}$ have finite second order moments. Then, the static GOMT problem \eqref{staticOMT} is well-posed.
\end{theorem}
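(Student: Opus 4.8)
The plan is to verify the three properties that make the linear program \eqref{staticOMT} well-posed: the feasible set $\Pi_2(\xi_0,\xi_{\mathrm{f}})$ is nonempty, the optimal value is finite, and it is attained by some coupling in $\Pi_2(\xi_0,\xi_{\mathrm{f}})$. The upper bound of Proposition~\ref{prop:BoundEuler} will handle the first two, and the direct method of the calculus of variations the third.

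First, observe that the second moment $\int_{\mathbb{R}^3\times\mathbb{R}^3}(\|\bm{x}_0\|^2+\|\bm{x}_{\mathrm{f}}\|^2)\,\differential\pi = M_0+M_{\mathrm{f}}$ of any coupling $\pi$ of $(\xi_0,\xi_{\mathrm{f}})$ depends only on the marginals, where $M_0:=\int_{\mathbb{R}^3}\|\bm{x}_0\|^2\,\xi_0(\bm{x}_0)\,\differential\bm{x}_0$ and $M_{\mathrm{f}}:=\int_{\mathbb{R}^3}\|\bm{x}_{\mathrm{f}}\|^2\,\xi_{\mathrm{f}}(\bm{x}_{\mathrm{f}})\,\differential\bm{x}_{\mathrm{f}}$ are finite by hypothesis; hence the finite-second-moment requirement in \eqref{DefPi2} is automatic, and in particular the independence coupling $\pi_{\mathrm{ind}}:=\xi_0\otimes\xi_{\mathrm{f}}$ belongs to $\Pi_2(\xi_0,\xi_{\mathrm{f}})$, so this set is nonempty. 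Integrating the bound $c(\bm{x}_0,\bm{x}_{\mathrm{f}})\le(\|\bm{x}_0\|^2+\|\bm{x}_{\mathrm{f}}\|^2)/t_{\mathrm{f}}$ of Proposition~\ref{prop:BoundEuler} against $\pi_{\mathrm{ind}}$ gives $\int c\,\differential\pi_{\mathrm{ind}}\le(M_0+M_{\mathrm{f}})/t_{\mathrm{f}}<\infty$, and since $c\ge 0$ the optimal value in \eqref{staticOMT} lies in the interval $[0,(M_0+M_{\mathrm{f}})/t_{\mathrm{f}}]$; in particular it is finite.

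For attainment I would argue as follows. The family $\Pi_2(\xi_0,\xi_{\mathrm{f}})$ is tight because its two marginals are fixed probability measures on $\mathbb{R}^3$ (hence individually tight), so by Prokhorov's theorem it is sequentially compact for the weak topology, and a weak limit of couplings of $(\xi_0,\xi_{\mathrm{f}})$ is again such a coupling. It therefore suffices to know that the ground cost $c$ is lower semicontinuous (it is also bounded below by $0$): then $\pi\mapsto\int c\,\differential\pi$ is weakly lower semicontinuous by the Portmanteau theorem, so a minimizing sequence has a weak cluster point $\pi^{\star}\in\Pi_2(\xi_0,\xi_{\mathrm{f}})$ that attains the infimum. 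Lower semicontinuity of $c$ can be established by a compactness argument applied to \eqref{OCPforGroundCost}: along $(\bm{x}_0^n,\bm{x}_{\mathrm{f}}^n)\to(\bm{x}_0,\bm{x}_{\mathrm{f}})$ with $\liminf_n c(\bm{x}_0^n,\bm{x}_{\mathrm{f}}^n)<\infty$, pick $1/n$-optimal controls $\bm{u}^n$, which are bounded in $L^2([0,t_{\mathrm{f}}];\mathbb{R}^3)$; passing to the coordinates \eqref{zdynamics}, the cancellation $\langle\bm{f}_0(\bm{z}),\bm{z}\rangle=0$ together with the boundedness of the pair $(\bm{A},\bm{b})$ (which depends continuously on $\bm{x}_{\mathrm{f}}$) yields an energy estimate of the form $\tfrac{\differential}{\differential t}\tfrac12\|\bm{z}\|^2\le C(1+\|\bm{z}\|^2)+\tfrac12\|\bm{u}^n\|^2$, so Gr\"onwall's inequality rules out finite-time blow-up and bounds the trajectories uniformly in $n$; they are then equicontinuous (in fact uniformly $\tfrac12$-H\"older, since $\dot{\bm{z}}$ is a uniformly bounded term plus an $L^2$ control), so along a subsequence the trajectories converge uniformly while $\bm{u}^n\rightharpoonup\bm{u}$ weakly in $L^2$, one passes to the limit in the integral form of \eqref{zdynamicsVectorForm} to obtain an admissible trajectory steering $\bm{x}_0$ to $\bm{x}_{\mathrm{f}}$, and weak lower semicontinuity of the $L^2$-norm gives $c(\bm{x}_0,\bm{x}_{\mathrm{f}})\le\tfrac12\int_0^{t_{\mathrm{f}}}\|\bm{u}\|^2\,\differential t\le\liminf_n c(\bm{x}_0^n,\bm{x}_{\mathrm{f}}^n)$. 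Alternatively, one may invoke the Tonelli-Lagrangian OMT framework of Figalli, already used for the dynamic problem in \cite{yan2023optimal,figalli2007optimal}, under which $c$ is continuous and the static/dynamic equivalence is available.

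The only genuinely delicate step is the lower semicontinuity (equivalently, the regularity) of the ground cost $c$ for the nonlinear dynamics \eqref{xdynamics}; the quadratic drift makes passage to the limit in a minimizing sequence of trajectories nontrivial, and the argument leans on the structural cancellation $\langle\bm{f}_0(\bm{z}),\bm{z}\rangle=0$ to preclude blow-up over the fixed horizon. Nonemptiness of $\Pi_2(\xi_0,\xi_{\mathrm{f}})$ and finiteness of the optimal value, by contrast, are immediate consequences of Proposition~\ref{prop:BoundEuler} and the finite-second-moment hypothesis.
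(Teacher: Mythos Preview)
Your argument is correct and follows the same skeleton as the paper's proof: nonemptiness via the product coupling $\xi_0\otimes\xi_{\mathrm{f}}$, finiteness of the optimal value via the quadratic upper bound of Proposition~\ref{prop:BoundEuler}, and attainment via a compactness argument on the set of couplings. The paper's own proof is terse on the attainment step, simply invoking ``continuity of the functional over a compact set,'' whereas you carry out the direct method in full: tightness of $\Pi_2(\xi_0,\xi_{\mathrm{f}})$ from the fixed marginals, Prokhorov for sequential weak compactness, and weak lower semicontinuity of $\pi\mapsto\int c\,\differential\pi$ from lower semicontinuity of $c$. Your additional work---the energy estimate exploiting $\langle\bm{f}_0(\bm{z}),\bm{z}\rangle=0$ together with Gr\"onwall to preclude blow-up, equicontinuity of trajectories, and weak-$L^2$ compactness of controls to pass to the limit in \eqref{OCPforGroundCost}---supplies precisely the regularity of $c$ that the paper leaves implicit (or defers to the Tonelli framework of \cite{figalli2007optimal}). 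In short, the route is the same, but your version makes the attainment step self-contained rather than asserted.
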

\begin{proof}
Under the stated assumption, a feasible solution trivially exists by considering the product distribution $\pi := \xi_{\mathrm{0}} \otimes \xi_{\mathrm{f}}$. It follows from the bounds derived in Proposition  \ref{cstnd} that $\int c(\bm{x}_0,\bm{x}_{\mathrm{f}}) \pi(\bm{x}_0,\bm{x}_{\mathrm{f}}) <\infty$. The set of measures with bounded second order moments is compact. Hence, existence of solution $\pi \mapsto \int c(\bm{x}_0,\bm{x}_{\mathrm{f}}) \pi(\bm{x}_0,\bm{x}_{\mathrm{f}})$ for \eqref{staticOMT} follows from continuity of the functional over a compact set.
\end{proof}

\subsection{A Feasible Control $\bm{u}^{*}$}\label{subsec:SpecificFeasibleController}
We next construct a specific feasible control for problem \eqref{OCPforGroundCost}, denoted as $\bm{u}^{*}$, which will turn out to be instructive for the development that follows.

\begin{figure*}[th]
\centering
\includegraphics[width=\linewidth]{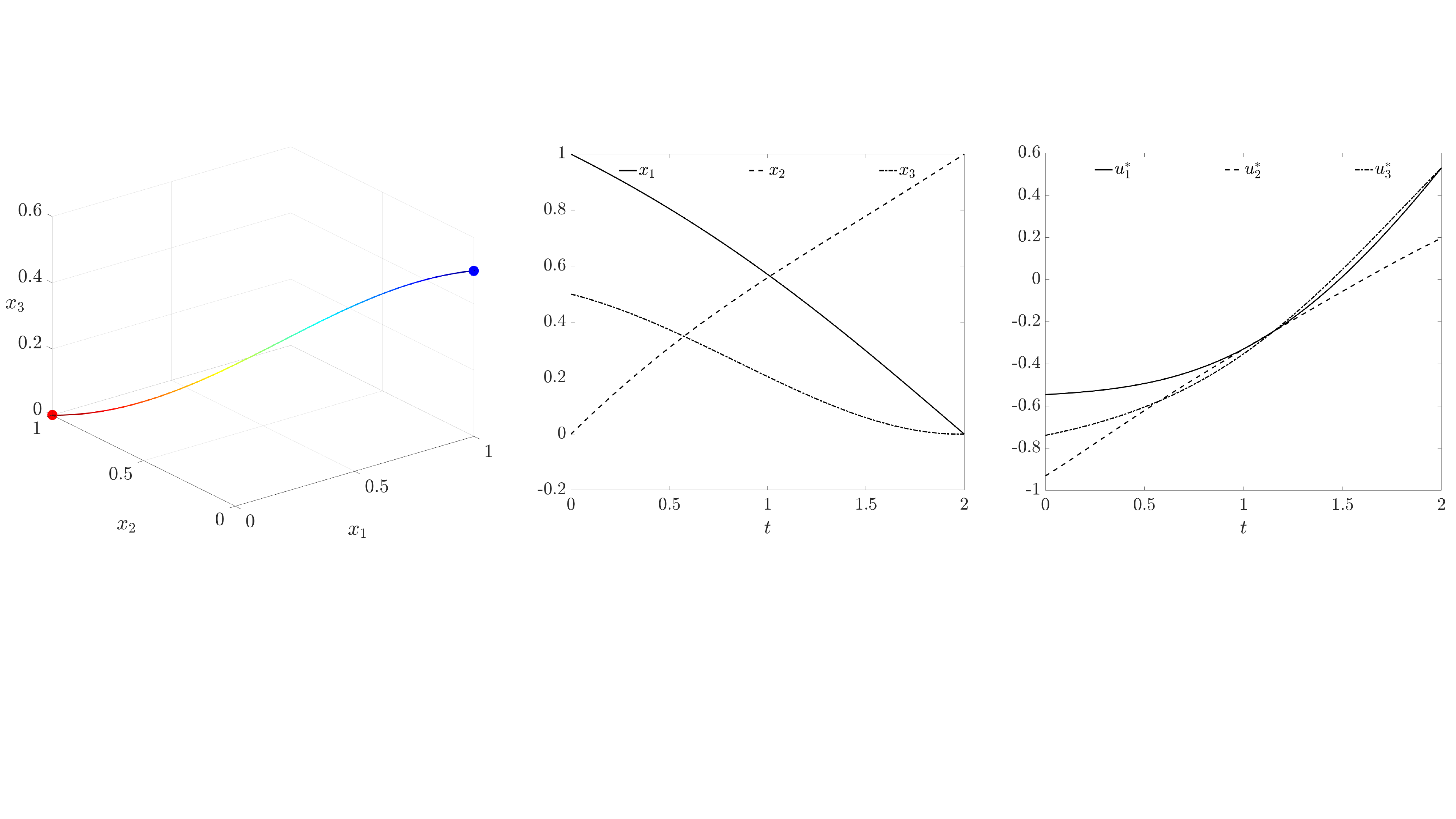}
\caption{\emph{Left:} The steering of \textcolor{blue}{$\bm{x}_0=(1,0,0.5)^{\top}$} to \textcolor{red}{$\bm{x}_{\mathrm{f}}=(0,1,0)^{\top}$} subject to \eqref{xdynamics} with $J_1 = 1, J_2 = 2, J_3=3$, over $[0,t_{\mathrm{f}}]=[0,2]$ using the feasible control $\bm{u}^{*}\left(\bm{x}-\bm{x}_{\mathrm{f}}\right)$ where $\bm{u}^{*}$ is given by \eqref{GeneralizedAthansFeasibleControl}; \emph{center:} time evolution of the corresponding state components; \emph{right:} time evolution of the corresponding control components.}
\label{fig:feasible}
\end{figure*}

\begin{proposition}
The control 
\begin{align}
\bm{u}^{*}(\bm{z}) := \left(-\dfrac{\|\bm{z}_{0}\|_2}{t_{\mathrm{f}}} - \dfrac{\langle\bm{z},\bm{Az}+\bm{b}\rangle}{\|\bm{z}\|_2}\right)\dfrac{\bm{z}}{\|\bm{z}\|_2}
\label{GeneralizedAthansFeasibleControl}
\end{align}
is guaranteed to steer the nonlinear dynamics \eqref{zdynamics} from arbitrary $\bm{z}_{0}$ to $\bm{z}_{\mathrm{f}} := \bm{0}$ over $[0,t_{\mathrm{f}}]$.  
\end{proposition}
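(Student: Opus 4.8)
The plan is to monitor the Euclidean norm $\|\bm{z}(t)\|_2$ of the closed-loop trajectory obtained by inserting $\bm{u}=\bm{u}^{*}(\bm{z})$ from \eqref{GeneralizedAthansFeasibleControl} into \eqref{zdynamicsVectorForm}. Away from the origin the feedback $\bm{u}^{*}$ is smooth, hence locally Lipschitz, so for any $\bm{z}_{0}\neq\bm{0}$ a unique maximal solution exists on some interval $[0,T)$; I assume $\bm{z}_{0}\neq\bm{0}$, the degenerate case $\bm{x}_{0}=\bm{x}_{\mathrm{f}}$ being treated separately or by a limiting argument. The entire proof is then a one-line computation for the norm together with a standard maximal-interval argument.

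First I would compute, along the closed loop, $\frac{\differential}{\differential t}\|\bm{z}\|_2^2 = 2\langle\bm{z},\bm{f}_{0}(\bm{z})\rangle + 2\langle\bm{z},\bm{Az}+\bm{b}\rangle + 2\langle\bm{z},\bm{u}^{*}(\bm{z})\rangle$. The first term vanishes by norm-invariance of the Euler drift, $\langle\bm{f}_{0}(\bm{z}),\bm{z}\rangle=0$, exactly as used in the proof of Proposition~\ref{cstnd}. Since $\bm{u}^{*}(\bm{z})$ points along $\bm{z}$, the last inner product is immediate: $\langle\bm{z},\bm{u}^{*}(\bm{z})\rangle = -\frac{\|\bm{z}_{0}\|_2}{t_{\mathrm{f}}}\|\bm{z}\|_2 - \langle\bm{z},\bm{Az}+\bm{b}\rangle$, so the affine-drift contributions cancel and one is left with the clean identity $\frac{\differential}{\differential t}\|\bm{z}(t)\|_2^2 = -\frac{2\|\bm{z}_{0}\|_2}{t_{\mathrm{f}}}\|\bm{z}(t)\|_2$, equivalently $\frac{\differential}{\differential t}\|\bm{z}(t)\|_2 = -\|\bm{z}_{0}\|_2/t_{\mathrm{f}}$ at every time where $\bm{z}(t)\neq\bm{0}$. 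This makes the design of \eqref{GeneralizedAthansFeasibleControl} transparent: the first summand imposes a constant radial contraction rate, while the second summand is exactly the feedforward that annihilates the radial component of the affine drift $\bm{Az}+\bm{b}$ (the Euler term $\bm{f}_{0}$ already having zero radial component).

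Integrating the norm identity yields $\|\bm{z}(t)\|_2 = \|\bm{z}_{0}\|_2\left(1-t/t_{\mathrm{f}}\right)$ for as long as the solution is defined. A standard maximal-interval argument then closes the proof: this formula keeps $\|\bm{z}(t)\|_2$ bounded above by $\|\bm{z}_{0}\|_2$ and, on every $[0,t]$ with $t<t_{\mathrm{f}}$, bounded away from $\bm{0}$, so the trajectory stays in a compact subset of $\mathbb{R}^{3}\setminus\{\bm{0}\}$ on which the feedback is smooth and $\|\bm{u}^{*}(\bm{z}(t))\|_2$ is bounded by $\|\bm{z}_{0}\|_2/t_{\mathrm{f}}+\|\bm{A}\|\,\|\bm{z}_{0}\|_2+\|\bm{b}\|_2$; hence the maximal existence time is at least $t_{\mathrm{f}}$. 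Since $\|\bm{z}(t)\|_2\to 0$ as $t\uparrow t_{\mathrm{f}}$, the trajectory extends continuously with $\bm{z}(t_{\mathrm{f}})=\bm{0}=\bm{z}_{\mathrm{f}}$, which is the assertion.

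The only point I would write out with genuine care is the coexistence of the singularity of $\bm{u}^{*}$ at $\bm{z}=\bm{0}$ with well-posedness of the closed loop: a priori the state could reach the origin, or the control could blow up, before $t=t_{\mathrm{f}}$. The explicit norm law $\|\bm{z}(t)\|_2 = \|\bm{z}_{0}\|_2(1-t/t_{\mathrm{f}})$ dispels this, since it pins the norm down exactly rather than merely bounding it; so there is no real obstacle beyond this bookkeeping, and the substance of the result lies entirely in the cancellation produced by the two terms of \eqref{GeneralizedAthansFeasibleControl}.
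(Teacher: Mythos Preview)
Your proof is correct and follows essentially the same approach as the paper: both compute $\frac{\differential}{\differential t}\|\bm{z}\|_2$ using $\langle\bm{f}_0(\bm{z}),\bm{z}\rangle=0$, observe that the affine term $\langle\bm{z},\bm{Az}+\bm{b}\rangle$ is cancelled by the second summand of $\bm{u}^{*}$, and obtain the linear decay $\|\bm{z}(t)\|_2=\|\bm{z}_0\|_2(1-t/t_{\mathrm{f}})$. The only difference is cosmetic---the paper presents the argument as a synthesis (positing $\bm{u}=g(\bm{z})\bm{z}/\|\bm{z}\|_2$ and solving for $g$), whereas you verify the stated $\bm{u}^{*}$ directly---and you add a maximal-interval/well-posedness discussion handling the singularity of $\bm{u}^{*}$ at $\bm{z}=\bm{0}$, which the paper does not make explicit.
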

\begin{proof}
The idea is to enforce
\begin{align}
\dfrac{\differential}{\differential t}\|\bm{z}\|_2 = k \quad\forall t\in[0,t_{\mathrm{f}}]
\label{NormDerivativeConstant}  
\end{align}
for some constant $k$, since then
\begin{align}
\|\bm{z}\|_2 = kt + \|\bm{z}_0\|_2 \quad\forall t\in[0,t_{\mathrm{f}}].
\label{NormEquality}    
\end{align}
Evaluating \eqref{NormEquality} at $t=t_{\mathrm{f}}$ determines the constant
\begin{align}
k = -\dfrac{\|\bm{z}_{0}\|_2}{t_{\mathrm{f}}},
\label{kintermsofzf}    
\end{align}
and we have
\begin{align}
\|\bm{z}\|_2 = \left(1 - \dfrac{t}{t_{\mathrm{f}}}\right) \|\bm{z}_0\|_2 \quad\forall t\in[0,t_{\mathrm{f}}].
\label{NormOfzUnderCandidateControl}    
\end{align}
Since the state norm is zero if and only if the state is zero, so a control accomplishing \eqref{NormDerivativeConstant}, if found, satisfies the endpoint constraints. 

Notice that
\begin{align}
\dfrac{\differential}{\differential t}\|\bm{z}\|_2 &= \dfrac{\differential}{\differential t}\sqrt{\langle\bm{z},\bm{z}\rangle}\nonumber\\
&= \dfrac{\langle\dot{\bm{z}},\bm{z}\rangle}{\|\bm{z}\|_2}\nonumber\\
&=\dfrac{\langle \bm{Az}+\bm{b},\bm{z}\rangle + \langle\bm{u},\bm{z}\rangle}{\|\bm{z}\|_2},
\label{DerivativeOfNorm}
\end{align}
where the last equality uses \eqref{zdynamicsVectorForm} and that $\langle \bm{f}_{0}(\bm{z}),\bm{z}\rangle = 0$. Using \eqref{DerivativeOfNorm}, we now show that a control of the form 
\begin{align}
\bm{u} = g(\bm{z})\dfrac{\bm{z}}{\|\bm{z}\|_2}
\label{FeasibleControlGenericForm}    
\end{align}
indeed accomplishes \eqref{NormDerivativeConstant} for some to-be-determined scalar function $g(\bm{z})$.

Substituting \eqref{FeasibleControlGenericForm} in \eqref{DerivativeOfNorm}, and invoking \eqref{NormDerivativeConstant}, we find
\begin{align}
g(\bm{z}) = k - \dfrac{\langle\bm{z},\bm{Az}+\bm{b}\rangle}{\|\bm{z}\|_2}.
\label{feasiblecontrolform}    
\end{align}
Substituting for $k$ in \eqref{feasiblecontrolform} from \eqref{kintermsofzf}, yields $\bm{u}=\bm{u}^{*}$ given by \eqref{GeneralizedAthansFeasibleControl}.
\end{proof}
\begin{remark}
By construction, if $\bm{u}^{*}$ is given by \eqref{GeneralizedAthansFeasibleControl}, then $\bm{u}^{*}\left(\bm{x}-\bm{x}_{\mathrm{f}}\right)$ is guaranteed to steer $\bm{x}_0$ to $\bm{x}_{\mathrm{f}}$ subject to \eqref{xdynamics} over $[0,t_{\mathrm{f}}]$. See Fig. \ref{fig:feasible}.   
\end{remark}
Except for special cases (Remark \ref{RemarkWhenustaroptimal}), the feasible $\bm{u}^{*}$ is suboptimal. We next deduce an inequality relating the cost associated with an arbitrary feasible control $\widetilde{\bm{u}}$ for problem \eqref{OCPforGroundCost}. This inequality will be crucial in Sec. \ref{sec:GroundCostNormInv}. Our analysis technique is inspired by the work of Athans et al. \cite{athans1963time}, and extends the same.

\subsection{Cost Inequality}\label{subsec:optimalityEuler}
For fixed $\bm{x}_{0},\bm{x}_{\mathrm{f}}$ (thus fixed $\bm{z}_{0}$), consider a feasible controller $\widetilde{\bm{u}}$ different from $\bm{u}^{*}$ in \eqref{GeneralizedAthansFeasibleControl}. 
Let the associated (non-identical) state trajectories be $\widetilde{\bm{z}}(t)$ and $\bm{z}^{*}(t)$, respectively. Since both controllers are feasible, $\widetilde{\bm{z}}(0) = \bm{z}^{*}(0)=\bm{z}_0$ and $\widetilde{\bm{z}}(t_{\mathrm{f}}) = \bm{z}^{*}(t_{\mathrm{f}})=\bm{0}$.

Notice that $$\dfrac{\differential}{\differential t}\|\widetilde{\bm{z}}\|_2 = \dfrac{\langle\dot{\widetilde{\bm{z}}},\widetilde{\bm{z}}\rangle}{\|\widetilde{\bm{z}}\|_2}=\dfrac{\langle\bm{A}\widetilde{\bm{z}}+\bm{b},\widetilde{\bm{z}}\rangle}{\|\widetilde{\bm{z}}\|_2} + \dfrac{\langle\bm{u},\widetilde{\bm{z}}\rangle}{\|\widetilde{\bm{z}}\|_2}.$$
Applying triangle inequality to above, we obtain
\begin{align}
\bigg\vert \dfrac{\differential}{\differential t}\|\widetilde{\bm{z}}\|_2 \bigg\vert &\leq \dfrac{\big\vert\widetilde{\bm{z}}^{\top}\bm{A}\widetilde{\bm{z}}\big\vert}{\|\widetilde{\bm{z}}\|_2} + \dfrac{\big\vert\langle\bm{b},\widetilde{\bm{z}}\rangle\big\vert}{\|\widetilde{\bm{z}}\|_2} + \dfrac{\big\vert\langle\widetilde{\bm{u}},\widetilde{\bm{z}}\rangle\big\vert}{\|\widetilde{\bm{z}}\|_2}\nonumber\\
&\leq \|\bm{A}\widetilde{\bm{z}}\|_2 + \|\bm{b}\|_2 + \|\widetilde{\bm{u}}\|_2,
\label{TraingleInequality}    
\end{align}
where the last line used the Cauchy-Schwarz inequality. From \eqref{TraingleInequality}, we have
$$\dfrac{\differential}{\differential t}\|\widetilde{\bm{z}}\|_2 \geq - \|\bm{A}\widetilde{\bm{z}}\|_2 - \|\bm{b}\|_2 - \|\widetilde{\bm{u}}\|_2,$$
or equivalently,
\begin{align}
&\|\bm{z}_{0}\|_2 \leq t_{\mathrm{f}}\|\bm{b}\|_2 + \int_{0}^{t_{\mathrm{f}}} \left(\|\bm{A}\widetilde{\bm{z}}\|_2 + \|\widetilde{\bm{u}}\|_2\right)\differential t\nonumber\\
\Rightarrow& \|\bm{z}_{0}\|_2 - t_{\mathrm{f}}\|\bm{b}\|_2 - \int_{0}^{t_{\mathrm{f}}}\|\bm{A}\widetilde{\bm{z}}\|_2\differential t \leq \int_{0}^{t_{\mathrm{f}}}\|\widetilde{\bm{u}}\|_2\differential t.
\label{IntegralOfControlNormLowerBound}
\end{align}

Next, recall the integral version of the Cauchy-Schwarz inequality: for $p,q$ square integrable,
\begin{align}
\left(\int_{0}^{t_{\mathrm{f}}}p(t)q(t)\differential t\right)^{2} \leq \left(\int_{0}^{t_{\mathrm{f}}}p^{2}(t)\differential t\right)\left(\int_{0}^{t_{\mathrm{f}}}q^{2}(t)\differential t\right).
\label{CSintegralversion}    
\end{align}
Specializing \eqref{CSintegralversion} for $p(t)=\|\widetilde{\bm{u}}\|_2$, $q(t)=1$, we get
\begin{align}
\dfrac{1}{2t_{\mathrm{f}}}\left(\int_{0}^{t_{\mathrm{f}}}\|\widetilde{\bm{u}}\|_2\:\differential t\right)^{2} \leq \int_{0}^{t_{\mathrm{f}}} \frac{1}{2}\widetilde{\bm{u}}^{\top}\widetilde{\bm{u}}\:\differential t.
\label{NormIntegralIneq}
\end{align}
Combining \eqref{IntegralOfControlNormLowerBound} and \eqref{NormIntegralIneq} gives the cost inequality
\begin{align}
\dfrac{1}{2t_{\mathrm{f}}}\left(\|\bm{z}_{0}\|_2 - t_{\mathrm{f}}\|\bm{b}\|_2 - \int_{0}^{t_{\mathrm{f}}}\|\bm{A}\widetilde{\bm{z}}\|_2\differential t\right)^{2}\leq \int_{0}^{t_{\mathrm{f}}} \frac{1}{2}\widetilde{\bm{u}}^{\top}\widetilde{\bm{u}}\:\differential t,
\label{LowerBound}
\end{align}
assuming the lower bound in \eqref{IntegralOfControlNormLowerBound} is nonnegative.

\begin{remark}\label{RemarkWhenustaroptimal}
It is particularly interesting to compare the lower bound in \eqref{LowerBound} with $\int_{0}^{t_{\mathrm{f}}} \frac{1}{2}\left(\bm{u}^{*}\right)^{\top}\bm{u}^{*}\:\differential t$. Using \eqref{GeneralizedAthansFeasibleControl}, we have
\begin{align}
\!\int_{0}^{t_{\mathrm{f}}}\!\frac{1}{2}\left(\bm{u}^{*}\right)^{\top}\!\bm{u}^{*}\differential t= \!\!\int_{0}^{t_{\mathrm{f}}}\!\frac{1}{2}\!\left(-\dfrac{\|\bm{z}_{0}\|_2}{t_{\mathrm{f}}} - \dfrac{\langle\bm{z}^{*},\bm{A}\bm{z}^{*}+\bm{b}\rangle}{\|\bm{z}^{*}\|_2}\right)^{2}\!\!\differential t.
\label{CandidateCost}    
\end{align}
In the next Section, we will utilize that for $\bm{A}=\bm{0}$, $\bm{b}=\bm{0}$, the lower bound in \eqref{LowerBound} equals to \eqref{CandidateCost}, i.e., then our feasible controller is also optimal.
\end{remark}


\section{Ground Cost for Optimal Transport Over A Class of Norm-invariant Systems}\label{sec:GroundCostNormInv}
In this Section, we consider GOMT problems over a special class of nonlinear systems. It turns out that the ground cost $c$ for such systems can be exactly determined using the same principles used to construct the feasible controller $\bm{u}^{*}$ in Sec. \ref{subsec:SpecificFeasibleController}.

Specifically, the GOMT problem of our interest is in the form
\begin{subequations}
\begin{align}
&\underset{\left(\xi,\bm{u}\right)}{\arg\inf} \displaystyle\int_{0}^{t_{\mathrm{f}}}\int_{\mathbb{R}^{3}}\frac{1}{2}\bm{u}^{\top}\bm{u}\;\:\xi(t,\bm{x})\differential\bm{x}\:\differential t\label{GOMTnorminvariantobj}\\
&\text{subject to}\quad\dfrac{\partial\xi}{\partial t} + \nabla_{\bm{x}}\cdot\left(\xi\left(\bm{f}(\bm{x})+\bm{u}\right)\right) = 0,\label{GOMTnorminvariantPDE}\\
&\qquad\qquad\quad\xi(t=0,\cdot) = \xi_0, \quad \xi(t=t_{\mathrm{f}},\cdot) = \xi_{\mathrm{f}},\label{GOMTnorminvariantBC}
\end{align}
\label{GOMTnorminvariant}    
\end{subequations}
\!\!where $\nabla_{\bm{x}}\cdot$ denotes divergence w.r.t. $\bm{x}\in\mathbb{R}^{d}$, and $\bm{f}$ has a specific structure. Before delving into that structure, note that the ground cost $c$ in the Kantorovich problem \eqref{staticOMT} associated with \eqref{GOMTnorminvariant} is the optimal value
\begin{subequations}
\begin{align}
&c(\bm{x}_{0},\bm{x}_{\mathrm{f}}) = \underset{\bm{u}}{\min}\int_{0}^{t_{\mathrm{f}}} \frac{1}{2}\bm{u}^{\top}\bm{u}\:\differential t\label{OCPforNormInvariantGroundCostobjx}\\
&\text{subject to} \quad\dot{\bm{x}}=\bm{f}(\bm{x})+\bm{u}, \label{OCPforNormInvariantGroundCostODEx}\\
&\qquad\qquad\quad\bm{x}(t=0)=\bm{x}_{0},\quad\bm{x}(t=t_{\mathrm{f}})=\bm{x}_{\mathrm{f}}.\label{OCPforNormInvariantGroundCostBCx}
\end{align}
\label{OCPforNormInvariantGroundCostx}
\end{subequations}
In the transformed state co-ordinate \eqref{xtoz}, we transcribe \eqref{OCPforNormInvariantGroundCostx} to
\begin{subequations}
\begin{align}
&c(\bm{x}_{0},\bm{x}_{\mathrm{f}}) =c(\bm{z}_0,\bm{0}) = \underset{\bm{u}}{\min}\int_{0}^{t_{\mathrm{f}}} \frac{1}{2}\bm{u}^{\top}\bm{u}\:\differential t\label{OCPforNormInvariantGroundCostobjz}\\
&\text{subject to} \quad\dot{\bm{z}}=\bm{f}(\bm{z}+\bm{x}_{\mathrm{f}})+\bm{u}, \label{OCPforNormInvariantGroundCostODEz}\\
&\qquad\qquad\quad\bm{z}(t=0)=\bm{z}_{0},\quad\bm{z}(t=t_{\mathrm{f}})=\bm{0}.\label{OCPforNormInvariantGroundCostBCz}
\end{align}
\label{OCPforNormInvariantGroundCostz}
\end{subequations}
The definition next imposes additional structure on $\bm{f}$.
\begin{definition}\label{def:translatednorminvariantsystems}[Translated norm-invariant system]
If the drift $\bm{f}$ satisfies
\begin{align}
\langle\bm{f}(\bm{z}+\bm{x}_{\mathrm{f}}),\bm{z}\rangle = 0 \quad\forall\bm{x}_{\mathrm{f}}\in\mathbb{R}^{d},
\label{ZeroInnerProduct}
\end{align}
then we say that \eqref{OCPforNormInvariantGroundCostODEx} satisfies \emph{translated norm-invariance} in the sense the unforced dynamics for \eqref{OCPforNormInvariantGroundCostODEz} in norm-invariant.
\end{definition}

Definition \ref{def:translatednorminvariantsystems} has the following implication. For dynamics \eqref{OCPforNormInvariantGroundCostODEz} where \eqref{ZeroInnerProduct} holds, we have
\begin{align}
\dfrac{\differential}{\differential t}\|\bm{z}\|_2 = \dfrac{\differential}{\differential t}\sqrt{\langle\bm{z},\bm{z}\rangle}=\dfrac{\langle\dot{\bm{z}},\bm{z}\rangle}{\|\bm{z}\|_2}=\dfrac{\langle\bm{u},\bm{z}\rangle}{\|\bm{z}\|_2}.
\label{DerivativeOfNormGeneric}
\end{align}
In particular, \eqref{DerivativeOfNormGeneric} implies that a control $\bm{u}=k\bm{z}/\|\bm{z}\|_2$ for some constant $k$, would result in $\dfrac{\differential}{\differential t}\|\bm{z}\|_2 = k$. Choosing $k$ as per \eqref{kintermsofzf} would then satisfy the endpoint constraints \eqref{OCPforNormInvariantGroundCostz}. In other words, the control  
\begin{align}
\bm{u}^{**}:=-\dfrac{\|\bm{z}_{0}\|_2}{t_{\mathrm{f}}}\dfrac{\bm{z}}{\|\bm{z}\|_2}
\label{ControlProptoUnitVector}
\end{align}
is \emph{feasible} for problem \eqref{OCPforNormInvariantGroundCostz}. 

Interestingly, the control $\bm{u}^{**}$ given by \eqref{ControlProptoUnitVector} is in fact \emph{optimal} for problem \eqref{OCPforNormInvariantGroundCostz}, as demonstrated in the following Proposition \ref{prop:optimalityfortranslatednorminvariantcontrol}. As a consequence, the ground cost for translated norm-invariant systems, denoted as $c_{\mathrm{norm-inv}}$, can be determined in closed form. The proof of Proposition \ref{prop:optimalityfortranslatednorminvariantcontrol} makes use of the inequalities derived in Sec. \ref{subsec:optimalityEuler}.

\begin{proposition}\label{prop:optimalityfortranslatednorminvariantcontrol}
The feasible control \eqref{ControlProptoUnitVector} is the unique minimizer for problem \eqref{OCPforNormInvariantGroundCostz} provided \eqref{OCPforNormInvariantGroundCostODEx} is translated norm-invariant. Consequently,
\begin{align*}
c_{\mathrm{norm-inv}}(\bm{x}_{0},\bm{x}_{\mathrm{f}}) &=c_{\mathrm{norm-inv}}(\bm{z}_0,\bm{0})\nonumber\\
&=\dfrac{\|\bm{z}_{0}\|_2^2}{2\:t_{\mathrm{f}}} = \dfrac{\|\bm{x}_{0} - \bm{x}_{\mathrm{f}}\|_2^2}{2\:t_{\mathrm{f}}}.
\end{align*}
\end{proposition}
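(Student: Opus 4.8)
The plan is to sandwich the optimal value of \eqref{OCPforNormInvariantGroundCostz} between a universal lower bound and the exact cost of $\bm{u}^{**}$, and then to recover uniqueness from the equality cases. First I would specialize the cost inequality of Sec.~\ref{subsec:optimalityEuler} to the present setting. Under translated norm-invariance \eqref{ZeroInnerProduct} the drift term disappears from \eqref{DerivativeOfNormGeneric}, which is precisely the instance $\bm{A}=\bm{0}$, $\bm{b}=\bm{0}$ flagged in the remark following \eqref{CandidateCost}. Hence, for any feasible $\bm{u}$, the Cauchy--Schwarz inequality gives $\big|\tfrac{\differential}{\differential t}\|\bm{z}\|_2\big| \le \|\bm{u}\|_2$, and integrating over $[0,t_{\mathrm{f}}]$ together with the endpoint data $\bm{z}(0)=\bm{z}_0$, $\bm{z}(t_{\mathrm{f}})=\bm{0}$ yields $\|\bm{z}_0\|_2 \le \int_0^{t_{\mathrm{f}}}\|\bm{u}\|_2\,\differential t$. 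Feeding this into the integral Cauchy--Schwarz inequality \eqref{NormIntegralIneq} produces the lower bound $\dfrac{\|\bm{z}_0\|_2^2}{2t_{\mathrm{f}}} \le \int_0^{t_{\mathrm{f}}} \tfrac12 \bm{u}^{\top}\bm{u}\,\differential t$ valid for every feasible $\bm{u}$.

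Next I would compute the cost of $\bm{u}^{**}$ from \eqref{ControlProptoUnitVector}. Along its trajectory $\|\bm{u}^{**}(t)\|_2 = \|\bm{z}_0\|_2/t_{\mathrm{f}}$ is constant, so the cost equals $\dfrac{\|\bm{z}_0\|_2^2}{2t_{\mathrm{f}}}$, matching the lower bound. Therefore $\bm{u}^{**}$ is a minimizer and $c_{\mathrm{norm-inv}}(\bm{z}_0,\bm{0}) = \|\bm{z}_0\|_2^2/(2t_{\mathrm{f}})$; substituting the change-of-coordinate \eqref{xtoz} gives the stated closed form in terms of $\bm{x}_0,\bm{x}_{\mathrm{f}}$.

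For uniqueness I would take an arbitrary minimizer $\widetilde{\bm{u}}$ and force all the intermediate inequalities above to be equalities. Equality in \eqref{NormIntegralIneq} forces $\|\widetilde{\bm{u}}(t)\|_2$ to be constant for a.e.\ $t$; equality in the pointwise Cauchy--Schwarz $|\langle\widetilde{\bm{u}},\widetilde{\bm{z}}\rangle| \le \|\widetilde{\bm{u}}\|_2\|\widetilde{\bm{z}}\|_2$ forces $\widetilde{\bm{u}}(t)$ collinear with $\widetilde{\bm{z}}(t)$, say $\widetilde{\bm{u}} = g(t)\,\widetilde{\bm{z}}/\|\widetilde{\bm{z}}\|_2$; and equality in $\|\bm{z}_0\|_2 = -\int_0^{t_{\mathrm{f}}}\tfrac{\differential}{\differential t}\|\widetilde{\bm{z}}\|_2\,\differential t \le \int_0^{t_{\mathrm{f}}}\big|\tfrac{\differential}{\differential t}\|\widetilde{\bm{z}}\|_2\big|\,\differential t$ forces $\tfrac{\differential}{\differential t}\|\widetilde{\bm{z}}\|_2 = g(t) \le 0$ a.e. A nonpositive $g$ of constant modulus is a nonpositive constant, and the endpoint constraints then pin it to $g\equiv -\|\bm{z}_0\|_2/t_{\mathrm{f}}$, giving $\widetilde{\bm{u}} = \bm{u}^{**}$. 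The degenerate case $\bm{z}_0 = \bm{0}$ is immediate, since then the lower bound is $0$ and any minimizer must have $\bm{u}\equiv\bm{0} = \bm{u}^{**}$.

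The main obstacle I expect is not the bound but the uniqueness bookkeeping: one has to confirm that $\widetilde{\bm{z}}(t)\neq\bm{0}$ on $[0,t_{\mathrm{f}})$ so the direction $\widetilde{\bm{z}}/\|\widetilde{\bm{z}}\|_2$ is well defined in the collinearity argument---which holds a posteriori because equality forces $\|\widetilde{\bm{z}}(t)\|_2 = (1-t/t_{\mathrm{f}})\|\bm{z}_0\|_2$---and that the a.e.\ conclusions (constant modulus, collinearity, monotone norm decrease) are mutually consistent through the ODE \eqref{OCPforNormInvariantGroundCostODEz}, so that $\widetilde{\bm{u}}$ and $\bm{u}^{**}$ genuinely coincide as admissible controls. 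A secondary point worth stating explicitly is that the \emph{only} structural hypothesis used is \eqref{ZeroInnerProduct}; it is what collapses the general cost inequality \eqref{LowerBound} to the clean $\bm{A}=\bm{0},\bm{b}=\bm{0}$ case and makes the feasible controller of Sec.~\ref{subsec:SpecificFeasibleController} simultaneously optimal.
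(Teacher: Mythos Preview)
Your argument for optimality is essentially the paper's own: specialize the cost inequality \eqref{LowerBound} to $\bm{A}=\bm{0}$, $\bm{b}=\bm{0}$ to obtain the universal lower bound $\|\bm{z}_0\|_2^2/(2t_{\mathrm{f}})$, observe that the cost of $\bm{u}^{**}$ attains it, and read off $c_{\mathrm{norm\text{-}inv}}$.

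Where you diverge is in the uniqueness step. The paper dispatches it in one line by appealing to strict convexity of the Lagrangian $\tfrac{1}{2}\bm{u}^\top\bm{u}$; you instead trace the equality conditions through the chain of Cauchy--Schwarz inequalities (constant $\|\widetilde{\bm{u}}\|_2$, collinearity of $\widetilde{\bm{u}}$ with $\widetilde{\bm{z}}$, sign constraint on $g$), and then close via the ODE to identify $\widetilde{\bm{u}}$ with $\bm{u}^{**}$. Your route is longer but arguably more honest: because the dynamics \eqref{OCPforNormInvariantGroundCostODEz} are nonlinear, the feasible set of controls is not obviously convex, so strict convexity of the integrand by itself does not immediately yield a unique minimizer without further comment. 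The equality-case analysis you outline avoids this issue entirely, and the bookkeeping you flag (nonvanishing of $\widetilde{\bm{z}}$ on $[0,t_{\mathrm{f}})$, ODE uniqueness for the resulting closed loop) is the right set of checks to make the argument airtight.
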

\begin{proof}
Notice that the feasible control $\bm{u}^{**}$ in \eqref{ControlProptoUnitVector} is a special case of the $\bm{u}^{*}$ in  \eqref{GeneralizedAthansFeasibleControl} obtained by setting $\bm{A}=\bm{0},\bm{b}=\bm{0}$. 

Now consider an \emph{arbitrary} feasible control $\widetilde{\bm{u}}$ for problem \eqref{OCPforNormInvariantGroundCostz}. Setting $\bm{A}=\bm{0},\bm{b}=\bm{0}$ in the cost inequality \eqref{LowerBound}, we have
$$\dfrac{\|\bm{z}_{0}\|_2^2}{2\:t_{\mathrm{f}}}=\int_{0}^{t_{\mathrm{f}}} \frac{1}{2}\left(\bm{u}^{**}\right)^{\top}\bm{u}^{**}\:\differential t \leq \int_{0}^{t_{\mathrm{f}}} \frac{1}{2}\widetilde{\bm{u}}^{\top}\widetilde{\bm{u}}\:\differential t.$$
Therefore, the feasible control \eqref{ControlProptoUnitVector} is, in fact, a minimizer for problem \eqref{OCPforNormInvariantGroundCostz}. Uniqueness of the minimizer follows from strict convexity of the Lagrangian in \eqref{OCPforNormInvariantGroundCostobjz}. 

Recalling that $\bm{z}_{0}:=\bm{x}_{0} - \bm{x}_{\mathrm{f}}$, the expression for the optimal cost follows.
\end{proof}

\begin{remark}
Proposition \ref{prop:optimalityfortranslatednorminvariantcontrol} shows that the Kantorovich OMT formulation associated with the GOMT problem \eqref{GOMTnorminvariant}, i.e., the GOMT problem for a norm-invariant system, has the same (scaled) squared-Euclidean ground cost as $c_{\mathrm{classical}}$ in \eqref{cclassical}. In particular,
$$c_{\mathrm{classical}}(\bm{x}_{0},\bm{x}_{\mathrm{f}}) = c_{\mathrm{norm-inv}}(\bm{x}_{0},\bm{x}_{\mathrm{f}}) = \dfrac{\|\bm{x}_{0} - \bm{x}_{\mathrm{f}}\|_2^2}{2\:t_{\mathrm{f}}}.$$
However, unlike the classical OMT, the geodesics for \eqref{OCPforNormInvariantGroundCostz} are not straight lines; see Fig. \ref{fig:feasible}. 
\end{remark}

Our results in Proposition 3 is atypical in the sense it is rare to have a closed-form expression for the ground cost $c$ induced by the associated deterministic optimal control problem. In comparison, existing results in the literature include: 
\begin{itemize}

\item the existence of solution when the cost does not admit {\it singular minimizing curves} or more weakly, {\it sharp extremals} \cite{agrachev2009optimal,figalli2010mass},

\item semi-closed form expression of linear quadratic costs for linear time invariant systems \cite{hindawi2011mass},

\item using change-of-variables to map GOMT over linear time-varying control systems to the Euclidean OMT with transformed endpoint PDF data \cite{chen2016optimal},

\item the existence-uniqueness of solutions and conditions for optimality for GOMT problems over feedback linearizable systems \cite{caluya2019finite,caluya2020finite},

\item restricting the support of the initial and target measures to equilibrium sets and using results on GOMT for convex ground costs \cite{elamvazhuthi2025optimal},

\item using the Benamou-Brenier formulation \cite{benamou2000computational} to establish existence of solutions for general nonlinear control systems \cite{elamvazhuthi2023dynamical,elamvazhuthi2024benamou}.
\end{itemize}

\section{Concluding Remarks}\label{sec:Conclusions}
In this work, we studied the ground cost for the optimal transport of angular velocity from a given initial to a given terminal joint PDF over a fixed deadline, and controlled angular velocity dynamics governed by the Euler's equation. The problem is of practical relevance, e.g., in designing guidance laws for stochastic stabilization of spin state for a spacecraft in the weak distributional sense. While existence-uniqueness and numerical simulation results for this generalized optimal mass transport problem has appeared in prior work \cite{yan2023optimal}, the associated Kantorovich optimal coupling formulation has not been studied before. This work fills this gap by deriving results for the ground cost for the Kantorovich formulation. 

The ground cost itself is the optimal value of certain structured deterministic nonlinear optimal control problem. We highlight how an analysis technique due to Athans et al. \cite{athans1963time} that combines different variants of the Cauchy-Schwarz inequality is relevant for this purpose. This is particularly noteworthy since analyzing Pontryagin's minimum principle in this setting becomes unwieldy. We establish that the analysis technique applies beyond the Eulerian angular velocity dynamics, and exactly determines the ground cost for a class of generalized optimal mass transport problems involving nonlinear dynamics with translated norm-invariant drift. The technique is very much in the spirit of the celebrated work \cite{steele2004cauchy}, and could be of broader interest.  


\section*{Acknowledgment}
We thank the reviewers' feedback which helped us fix several typos and significantly improved the presentation.

\balance

\bibliographystyle{IEEEtran}
\bibliography{References.bib}

\end{document}